\newtheoremstyle{dotless}{}{}{\itshape}{}{\bfseries}{}{ }{}
\newtheorem{Theorem}{Theorem}[section]
\newtheorem{Lemma}[Theorem]{Lemma}
\theoremstyle{definition} 
\newtheorem{definition}[Theorem]{Definition} 
\newtheorem*{acknowledgements}{Acknowledgements}
\newtheorem*{Remark}{Remark}
\newtheorem*{Example}{Example}
\DeclareMathOperator{\piprod}{\raisebox{-0.1em}{\huge{$\pi$}}\kern -0.2em}
\newcommand{\cu}{\mathcal {U}}
\newcommand{\nn}{{\mathbb N}}
\newcommand{\rr}{{\mathbb R}}
\newcommand{\zz}{{\mathbb Z}}
\newcommand{\Fp}{{\mathbb F_p}}
\newcommand{\gdim}{\operatorname{gdim}}
\newcommand{\Cone}{\operatorname{Cone}}
\def\clap#1{\hbox to 0pt{\hss#1\hss}}
\newcommand{\comment}[1]{}
\newcommand{\gs}{\sigma} 
\newcommand{\gt}{\tau}
\newcommand{\gG}{\Gamma}
\newcommand{\id}{\operatorname{id}}
\newcommand{\ent}{\operatorname{ent}}
\newcommand{\vol}{\operatorname{Vol}}
\newenvironment{enumeratei'}{ 
\begin{enumerate}[\upshape (i)$'$]}
	{ 
\end{enumerate}
} 
\newenvironment{enumerate1'}{ 
\begin{enumerate}[\upshape (1)$'$]}
	{ 
\end{enumerate}
} 
\newenvironment{enumeratea'}{ 
\begin{enumerate}[\upshape (a)$'$]}{ 
\end{enumerate}
}
  \definecolor{colore}{cmyk}{0,1,0.6,0}
  \definecolor{coloregen}{cmyk}{0.7,0,1,0}
  \definecolor{coloresimo}{cmyk}{1,0.6,0,0}
  \definecolor{colore}{cmyk}{0,0,0,1}
  \definecolor{coloregen}{cmyk}{0,0,0,1}
  \definecolor{coloresimo}{cmyk}{0,0,0,1}
\numberwithin{equation}{section} 
\newcommand{\showcomments}{yes}
\newsavebox{\commentbox}
\title{Minimal volume entropy of RAAG's}
\author{Matthew Haulmark}
\author{Kevin Schreve}
\begin{document}
\maketitle
\begin{abstract}
Bregman and Clay recently characterized which right-angled Artin groups with geometric dimension $2$ have vanishing minimal volume entropy. In this note, we extend this characterization to higher dimensions. \end{abstract}
\section{Introduction}

Let $X$ be a finite complex with a piecewise Riemannian metric $g$ (i.e. a collection of Riemannian metrics on cells which agree on intersections). Fix a basepoint $x_0$ in the universal cover $\widetilde X$, and let $\tilde g$ be the pulled-back metric on $\widetilde X$. The associated \emph{volume entropy} of $(X,g)$ is the exponential growth rate of the balls $B_{x_0}(t)$ in the universal cover: $$\ent(X,g) = \lim_{t \rightarrow \infty} \frac{1}{t} \log \vol(B_{x_0}(t), \tilde g)$$ 

This limit does not depend on the choice of basepoint.
We now define the \emph{minimal volume entropy} $\omega(X)$ to be $$\omega(X) = \inf_g \ent(X,g) \vol(X,g)^{1/\dim X}$$ 
where we minimize over all piecewise Riemannian metrics $g$. Normalizing by the volume guarantees this does not change under scaling $g$. This invariant was initially defined for Riemannian manifolds in \cite{gromov}. 

Now, suppose $G$ is a group with a finite classifying space $BG$. Let $\gdim(G)$ be the \emph{geometric dimension} of $G$, i.e. $\gdim(G)$ is the minimal dimension of such a $BG$.  We define the minimal volume entropy of $G$, denoted $\omega(G)$, to be the infimum of $\omega(BG)$ over all finite classifying spaces $BG$ of dimension = $\gdim(G)$. We say a classifying space $BG$ is  \emph{minimal dimensional} if $\dim BG = \gdim(G)$.

In this note, we study this invariant for right-angled Artin groups (from now on RAAG's). 
If $L$ is a flag simplicial complex, recall that the associated RAAG $A_L$ has a presentation with generators corresponding to vertices, and where two generators commute iff the two vertices span an edge in $L$. We give an almost complete characterization of the (non)vanishing of $\omega(A_L)$ based on the topology of the defining flag complex $L$. The geometric dimension of $A_L$ is equal to $\dim L + 1$ (an $n$-simplex in $L$ corresponds to an $\zz^{n+1}$ subgroup of $A_L$, so this is an obvious lower bound). We prove the following theorem:

\begin{Theorem}
\label{t:main}
Let $L$ be a $d$-dimensional flag complex and $A_L$ be the corresponding RAAG. Then 

\begin{enumerate}
\item If $H^d(L, \zz) \ne 0$, then $\omega(A_L) > 0$. 
\item If $L$ embeds into a $d$-dimensional contractible complex, then $\omega(A_L) = 0$. 
\end{enumerate}
\end{Theorem}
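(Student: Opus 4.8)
The two statements are of opposite character, and I would prove them by unrelated methods: (1) is a lower bound on $\omega$, while (2) is a construction of metrics whose normalized entropy tends to $0$. In both cases I would work with the Salvetti complex $S_L$, which is an explicit minimal-dimensional ($\dim = d+1 = \gdim(A_L)$) classifying space for $A_L$, so that it suffices to control $\ent(g)\vol(g)^{1/(d+1)}$ for piecewise Riemannian metrics $g$ on $S_L$.

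For (2) the model case is $L$ a single $d$-simplex, where $A_L=\zz^{d+1}$ and $S_L$ is a flat torus $T^{d+1}$ with $\omega=0$; more generally, coning a vertex onto a complex $M$ (which preserves contractibility) replaces $A_M$ by $A_M\times\zz$, and here $\omega(A_M\times\zz)=0$ by shrinking the new circle: taking $g_\epsilon$ to scale the $S^1$ factor by $\epsilon$ gives $\vol(g_\epsilon)=\epsilon\,\vol(g)\to 0$ while $\ent(g_\epsilon)=\ent(g_M)$ stays bounded (the $\zz$ direction contributes only polynomially to the growth of balls), so the normalized product tends to $0$. The plan is to promote this to a general contractible $K$: using the embedding $L\hookrightarrow K$ I would build a family $g_\epsilon$ on $S_L$ that shrinks the $(d+1)$-cells (the tori dual to the $d$-simplices of $L$) to zero volume while keeping all distances, and hence $\ent(g_\epsilon)$, uniformly bounded. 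Contractibility of $K$ is precisely what makes such a collapse globally consistent: it supplies the filling that lets every top cell be thinned simultaneously without reintroducing volume elsewhere. Estimating $\vol(g_\epsilon)\to 0$ with $\ent(g_\epsilon)=O(1)$ then yields $\omega(A_L)=0$.

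For (1) the first observation is that ordinary lower bounds via simplicial volume are useless: $H_{d+1}(S_L;\rr)$ is generated by tori, so the simplicial volume of $S_L$ vanishes even when $\omega(A_L)>0$ (already $F_2\times F_2$ shows this). The right invariant is a field-coefficient homology-growth ($\ell^2$) lower bound. I would first restate the hypothesis: for a $d$-complex, $H^d(L;\zz)\neq 0$ holds iff $\tilde H_d(L;\FF)\neq 0$ for some field $\FF$ (over $\Q$ when the class is free, over $\Fp$ when $H^d(L;\zz)$ is torsion, e.g.\ $L$ a triangulated $\rr P^2$). By the Davis--Leary computation of the $\ell^2$-Betti numbers of a RAAG (and its field-coefficient analogue), this is equivalent to the top number $b^{(2)}_{d+1}(A_L;\FF)$ being nonzero. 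I would then invoke the lower bound that a nonzero top $\ell^2$-Betti number, over any field, forces $\omega>0$: exponentially many homologically essential top cycles must appear in balls of the universal cover, and a systolic-type volume estimate converts this into a genuine exponential volume lower bound valid for every metric.

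The main obstacle is this lower bound in (1), and specifically the torsion case. When $H^d(L;\zz)$ is pure torsion the rational $\ell^2$-Betti number vanishes, so one cannot argue over $\rr$; positivity must instead be detected with $\Fp$-coefficients, i.e.\ through mod-$p$ homology growth. Making the implication ``nonzero mod-$p$ top homology growth $\Rightarrow$ positive minimal volume entropy'' rigorous — rather than the cleaner but strictly weaker real-coefficient statement — is where the real work lies, and is exactly what separates the integral hypothesis $H^d(L;\zz)\neq 0$ from its rational shadow.
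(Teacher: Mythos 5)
Your plan for (1) stops exactly where the actual proof has to start. The reduction you make is correct and matches the paper's: $H^d(L;\zz)\neq 0$ iff $H_d(L;\Fp)\neq 0$ for some $p$, and the torsion case genuinely forces field coefficients (the paper uses the Avramidi--Okun--Schreve computation of mod-$p$ homology growth of RAAG's, which is the field-coefficient analogue of Davis--Leary that you allude to). But you then \emph{invoke} the implication ``nonzero top homology growth over a field $\Rightarrow \omega > 0$'' as if it were an available lower bound, gesturing at ``exponentially many essential cycles'' and a ``systolic-type estimate.'' No such black box exists; that implication \emph{is} the theorem, and the paper proves it by contradiction through the Babenko--Sabourau fiber conditions: if some minimal-dimensional $BA_L$ had FCA, i.e.\ a map to a complex of dimension $\le d$ whose point-preimage components have subexponentially growing (hence abelian, hence amenable) image subgroups, then pulling back the open-star cover of the target gives an amenable cover of multiplicity $\le d+1$, and Sauer's theorem (whose proof extends from aspherical manifolds to aspherical complexes) forces the degree-$(d+1)$ $\Fp$-homology growth to vanish, contradicting $\bar b_d(L,\Fp)\neq 0$. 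Failure of FCA then yields FNCA because every nonabelian two-generator subgroup of a RAAG is free (Baudisch), giving uniform uniform exponential growth, and Babenko--Sabourau's FNCA theorem gives $\omega(A_L)>0$. A separate error in your setup: for the \emph{lower} bound it does not suffice to control metrics on the Salvetti complex $S_L$, since $\omega(A_L)$ is an infimum over \emph{all} minimal-dimensional classifying spaces; the paper's argument survives this because homology growth is a group invariant and the FCA/FNCA dichotomy applies to every model (alternatively one cites Bregman--Clay's Proposition 3.9).

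For (2), your metric-shrinking sketch has two gaps. First, shrinking cells decreases distances and generically \emph{increases} entropy (scaling a hyperbolic metric by $\epsilon$ multiplies $\ent$ by $1/\epsilon$; the normalized product is scale-invariant), so ``collapse volume while keeping entropy bounded'' only works when the collapsed directions carry subexponential growth --- which is precisely the FCA hypothesis you would end up re-proving, and Babenko--Sabourau's theorem is the nontrivial metric work hiding behind it. Second, contractibility of $K$ never actually enters your construction: ``it supplies the filling that lets every top cell be thinned'' is an assertion, not an argument, and the Salvetti complex is the wrong model to make it one, since $S_L$ has a single vertex and no natural map to $L'$ or to $\Cone(L)$. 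The paper instead builds, via the Borel construction and Geoghegan's rebuilding procedure, a compact model $X_L=\Cone(L)\cup Y_L$ that fibers over $K$ with point-preimages tori; when $L$ embeds in a $d$-dimensional contractible $L'$, the amalgam $Y_L\cup_L L'$ is a $(d+1)$-dimensional $BA_L$ mapping simplicially to the $d$-dimensional complex $L'$ with fibers tori or points. Contractibility of $L'$ is used exactly to know this amalgam is aspherical (i.e.\ is a $BA_L$), not to ``make a collapse consistent''; FCA is then immediate and Babenko--Sabourau gives $\omega(A_L)=0$.
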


Bregman and Clay had previously proved this theorem when $L$ is a flag graph \cite{bc21}. 
If $d \ne 2$, then $H^d(L,\zz) = 0$ is equivalent to $L$ embedding into a $d$-dimensional contractible complex, hence these conditions are complementary [Remark 1.26, \cite{dgo}]. This embedding condition was used in \cite{dgo} to construct ``low" dimensional manifold models for $BA_L$. By the universal coefficient theorem, $H^d(L,\zz) = 0$ is equivalent to $H_d(L, \zz) = 0$ and $H_{d-1}(L, \zz)$ being free abelian. Also, by universal coefficients again, it's equivalent to $H_d(L, \mathbb F_p) = 0$ for all primes $p$.

\begin{Example}
Here is a curious example. Suppose $L_1$ is a flag triangulation of $\rr P^2$, and suppose that $L_2$ is a flag triangulation of a $2$-dimensional $\zz/3$-Moore space (for instance $L_2$ is obtained by attaching a disc to a circle by a degree $3$-map). By Theorem \ref{t:main}, both $A_{L_1}$ and $A_{L_2}$ have nonvanishing minimal volume entropy. On the other hand, again by Theorem \ref{t:main}, their product has vanishing minimal volume entropy (as the join $L_1 \ast L_2$ has $H_5(L, \Fp) = 0$ for all $p$). This is in contrast with the simplicial volume $||M||$ of a closed manifold $M$; Gromov proved in \cite{gromov} the inequality $$||M||||N|| \le || M \times N|| \le {\dim M + \dim N \choose \dim M} ||M||||N||.$$

For a closed $m$-manifold $M$, Gromov also showed the inequality $$\omega(M)^m \ge C_m ||M||$$ for a constant $C_m$ only dependent on $m$. In particular, the product of any two manifolds $M \times N$ with $\omega(M), \omega(N) > 0$  has $\omega (M \times N) > 0$. 
\end{Example}

To prove Theorem \ref{t:main}, we use the following fibering criteria of Babenko-Sabourau \cite{bs21}. This was previously used by Bregman and Clay \cite{bc21} to compute $\omega$ of RAAG's based on flag graphs. 

\begin{definition}
Let $X$ be a simplicial complex. We say $X$ has FCA (fiber collapsing assumption) if there is a simplicial complex $P$ with $\dim P < \dim X$ and a simplicial map $f : X \rightarrow P$ so that for all $p \in P$, if $F_p$ is a component of $f^{-1}(p)$ then the image subgroup $i_\ast(\pi_1(F_p))$ in $\pi_1(X)$ is subexponentially growing with subexponential growth rate $< 1- \frac{\dim P}{\dim X}$. 
\end{definition}

Babenko and Sabourau \cite{bs21} show that if $X$ has FCA then $\omega(X) = 0$. If $X$ is a minimal dimensional classifying space for $G$, this of course implies that $\omega(G) = 0$. 
In our setting, all subexponentially growing subgroups of RAAG's are free abelian (this follows from a theorem of Baudisch described below). The subexponential growth rate of a finitely generated free abelian group is 0, so we do not require going into the details of this rate. 

\begin{definition}
A group $G$ has \emph{uniform exponential growth} if there is a $\delta > 0$ so that the growth of $G$ with respect to any finite generating set $S$ is exponential with growth rate $> \delta$. 
We say $X$ has FNCA (fiber non-collapsing assumption) if there is a $\delta > 0$ so that every map $f:X \rightarrow P$ with $\dim P < \dim X$ has a point preimage with the induced subgroup $\pi_1^{-1}(f(p))$ in $\pi_1(X)$ having uniform exponential growth $> \delta$. 
\end{definition}

Babenko and Sabourau \cite{bs21} also show that if $X$ has FNCA, then $\omega(X) > \epsilon_m$, where $\epsilon_m$ only depends on the dimension $m$ of $X$ and $\delta$. 
The conditions FCA and FNCA are almost, but not quite, complementary. 
On the other hand, Bregman and Clay \cite{bc21} show that they are for classifying spaces of RAAG's. This follows from the following facts: RAAG's have so-called \emph{uniform uniform exponential growth}, in the sense that there is a $\delta > 0$ so that every non-abelian finitely generated subgroup of a given RAAG  has uniform exponential growth $> \delta$ (every nonabelian subgroup on two generators is free by a theorem of Baudisch \cite{baudisch}). Now, if a classifying space for a RAAG does not have the FCA, then any map to a smaller dimensional complex has a point preimage which is nonabelian. Hence, this subgroup has uniform exponential growth $> \delta$, and in particular has the FNCA. It follows that for a RAAG $A_L$, if we can show that every minimal dimensional classifying space $BA_L$ has FNCA, this will imply that $\omega(A_L) > 0$ (in fact, in this case it suffices to exhibit FNCA for one model of $BA_L$, see Proposition 3.9 of \cite{bc21}).

Part (2) of Theorem \ref{t:main} follows from an explicit construction of $BA_L$. 
This model for $BA_L$ is built by gluing together tori of various dimensions corresponding to the simplices of $L$. 
If $L$ embeds into a contractible complex $L'$ of the same dimension, then this model for $BA_L$ naturally maps to $L'
$, and the preimages of points are homotopic to tori or points. 
Therefore, this model for $BA_L$ has FCA. To prove part (1), we consider the mod $p$ homology growth in residual chains of finite index subgroups. For RAAG's, this was recently computed by Avramidi, Okun and the second author \cite{aos}. If there is a minimal dimensional model for $BA_L$ with FCA, it will follow from a result of Sauer's that this growth vanishes in the top dimension for all $p$ \cite{s16}. The computation in \cite{aos} then shows that the top $\Fp$-homology of $L$ vanishes for all $p$.

\begin{acknowledgements} We thank Roman Sauer for answering a question about his work in \cite{s06} and \cite{s16}, and Matt Clay for helpful comments on an earlier draft. 

\end{acknowledgements}

\section{Classifying spaces of RAAG's}\label{s:bal}

Our calculation of $\omega(A_L)$ relies on the following construction of models for $BA_L$. Let $L$ be the defining flag complex of the RAAG, and let $K$ be the geometric realization of the poset of simplices of $L$. 
Then $K$ is isomorphic to the cone on the barycentric subdivision of $L$, where the cone point corresponds to the empty simplex. 
Given a simplex $\gs \in L$, let $A_\gs$ be the corresponding free abelian subgroup of $A_L$. 
A point $x \in K$ is contained in some minimal simplex, which corresponds to a chain of simplices of $L$. 
Let $\gs(x)$ be the smallest element in this chain. The \emph{basic construction} $\cu(A_L, K)$ is defined to be $$\cu(A_L, K) := A_L \times K/\sim$$ where $(g,x) \sim (g',x')$ if and only if $x = x'$ and $gA_{\gs(x)} = g'A_{\gs(x)}$. 

Then $G$ acts on $\cu(A_L, K)$ with strict fundamental domain $1 \times K$, which we identify with $K$\footnote{A strict fundamental domain for a group action on a CW-complex is a subcomplex which intersects each orbit in a single point.}. For RAAG's, it is known that $\cu(A_L, K)$ is contractible, in fact it admits a natural CAT(0) cubical structure. 

The stabilizer of a simplex $\tau \subset K$ is the free abelian special subgroup $A_{\min \gt}$, where $\min \gt$ is the smallest element in the corresponding chain of simplices of $L$. In particular, the stabilizer is trivial if and only if $\gt$ contains the cone point corresponding to the empty simplex. 

Let $\cu=\cu(A_L,K)$. To build a classifying space $BA_L$, we use the Borel Construction $EA_L \times_{A_L} \cu$ (if $X$ and $Y$ are two $G$-spaces, then $X \times_G Y$ is the direct product $X \times Y$ quotiented by the diagonal $G$-action). 
Since $\cu$ is contractible, this produces a (noncompact) model of $BA_L$. 

The action of the stabilizer of any cell in $\cu$ fixes the cell. 
It follows that $EA_L \times_{A_L} \cu$ naturally maps to $\cu/A_L = K$.
The preimage of an open simplex $\gt$ in $K$ is homeomorphic to $\tau \times EA_L/A_{\min \gt}$, hence homotopy equivalent to a torus $T^{\min \tau}$. A rebuilding procedure of Geoghegan lets us build a compact model for $BA_L$, denoted $X_L$, which maps to $K$ and the preimage of a simplex $\gt$ is homeomorphic to $\gt \times T^{\min \gt}$ [Chapter 6, \cite{g08}]. Note that $X_L$ is $(\dim L + 1)$-dimensional; it follows that the geometric dimension of $A_L$ is $\dim L + 1$. 

In particular, we can assume that if $\gt$ is a simplex in  $K$ which contains the cone point, then the preimage in $X_L$ is a copy of $\gt$. Therefore, $X_L = \Cone(L) \cup Y_L$, where $Y_L$ is the preimage of $L$ in $BA_L$. We say $Y_L$ is the \emph{toral subcomplex} of $X_L$. The following lemma is immediate from the discussion. 

\begin{Lemma}\label{l:contract}
If $L$ is contractible, then $Y_L$ is a model for $BA_L$. 
\end{Lemma}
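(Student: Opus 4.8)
The plan is to show that the inclusion $Y_L \hookrightarrow X_L$ is a homotopy equivalence; since $X_L$ is already a compact model for $BA_L$, and a classifying space is determined up to homotopy equivalence, this immediately yields that $Y_L$ is a model for $BA_L$ as well.

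First I would record the decomposition $X_L = \Cone(L) \cup Y_L$ as an honest pushout. The piece $\Cone(L)$ is the preimage in $X_L$ of the simplices of $K$ containing the cone point; as those simplices have trivial stabilizer, this preimage is literally a copy of $\Cone(L)$. It is attached to the toral subcomplex $Y_L$ along its base $L$, which sits inside $Y_L$ as the ``identity section'' $\gs \mapsto \gs \times \{1\} \subset \gs \times T^{\min \gs}$ ranging over the simplices $\gs$ of $L$. Thus $X_L$ is the pushout $Y_L \cup_L \Cone(L)$, the two maps out of $L$ being the section $L \hookrightarrow Y_L$ and the base inclusion $\iota \colon L \hookrightarrow \Cone(L)$; in particular $\Cone(L) \cap Y_L$ is exactly this copy of $L$, with no torus directions.

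Next I would observe that $\iota$ is a cofibration, being the inclusion of a subcomplex, and that by hypothesis $L$ is contractible while $\Cone(L)$ is contractible for any $L$, so $\iota$ is a homotopy equivalence between contractible complexes. Now apply the gluing lemma: the cobase change of a cofibration that is also a homotopy equivalence is again a homotopy equivalence. Since $\iota$ is such a map and $Y_L \hookrightarrow X_L$ is precisely its cobase change along the section $L \hookrightarrow Y_L$, the inclusion $Y_L \hookrightarrow Y_L \cup_L \Cone(L) = X_L$ is a homotopy equivalence. Hence $Y_L \simeq X_L \simeq BA_L$, as desired.

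The only real obstacle is the first step: one must check that the attachment of $\Cone(L)$ to $Y_L$ is along a genuine copy of $L$, realized as a section, so that the pushout description with $\iota$ as a cofibration is valid. Once this is set up, the conclusion is a formal consequence of the gluing lemma, requiring nothing beyond the contractibility of $L$ (via that of $\Cone(L)$).
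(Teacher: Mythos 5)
Your proof is correct and follows essentially the same route as the paper, which offers no explicit argument beyond declaring the lemma ``immediate from the discussion'' establishing the decomposition $X_L = \Cone(L) \cup_L Y_L$. Your contribution is simply to make that immediacy precise — identifying the attachment as a pushout along the section copy of $L$ in $Y_L$ and invoking the gluing lemma for the acyclic cofibration $L \hookrightarrow \Cone(L)$ — which is exactly the step the authors leave to the reader.
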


\section{$\omega(A_L)$}

\subsection{Vanishing of entropy for RAAG's}\label{s:vanishing}

\begin{Theorem}\label{t:vanishing}
Let $A_L$ be a RAAG based on a $d$-dimensional flag complex $L$. Suppose that $L$ embeds into a $d$-dimensional contractible complex. Then $\omega(A_L) = 0$.
\end{Theorem}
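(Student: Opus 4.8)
The plan is to verify the FCA (fiber collapsing assumption) for the compact model $X_L$ of $BA_L$ constructed in Section~\ref{s:bal}, since the discussion preceding the theorem shows that FCA implies $\omega(A_L) = 0$ via the Babenko--Sabourau criterion. So the whole task reduces to producing a simplicial map $f : X_L \to P$ with $\dim P < \dim X_L = d+1$ whose point-preimage components have subexponentially growing (hence free abelian, by Baudisch) $\pi_1$-image.

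\begin{proof}[Proof sketch]
By hypothesis $L$ embeds into a $d$-dimensional contractible complex $L'$. First I would recall the structure of $X_L$: it maps to $K = \Cone(L)$ (the cone on the barycentric subdivision of $L$), and the preimage of an open simplex $\gt \subset K$ is $\gt \times T^{\min \gt}$. The key idea is that the embedding $L \hookrightarrow L'$ lets us replace the cone $K$ over $L$ by a \emph{contractible} target: since $L'$ is contractible and $d$-dimensional, we extend the toral bundle structure over a cone on $L'$, or more directly, we build the analogous basic-construction model $X_{L'}$ and observe that $X_L$ includes into $X_{L'}$ as the preimage of $\Cone(L) \subset \Cone(L')$. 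The map I want is the composite $X_L \to K_{L'} = \Cone(L') \to L'$, where the second map is cone-collapse onto $L'$ followed by the given retraction data; after passing to barycentric subdivisions to make everything simplicial, this gives $f : X_L \to L'$ with $\dim L' = d < d+1 = \dim X_L$.

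Next I would analyze the point-preimages of $f$. Over a point $p$ in the open simplex $\gs$ of $L'$, the fiber of $X_L \to L'$ is (up to homotopy) the torus $T^{\min}$ corresponding to the appropriate simplex of $L$, together with cone directions that collapse; the crucial point is that each connected component $F_p$ of $f^{-1}(p)$ is homotopy equivalent to a torus (or a point, over cone-type simplices), so $i_\ast(\pi_1(F_p))$ lands in a special subgroup $A_\gs \cong \zz^{|\gs|}$ of $A_L$. A finitely generated free abelian group has subexponential (indeed polynomial) growth, so its subexponential growth rate is $0 < 1 - \frac{\dim P}{\dim X}$, and the FCA inequality holds trivially. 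This is precisely the mechanism sketched in the introduction: the preimages of points are homotopic to tori or points.

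The main obstacle, and the step requiring the most care, is making the construction genuinely simplicial and controlling the fibers over simplices of $L'$ that are \emph{not} in the image of $L$. Over such a simplex the fiber should be a point (no torus direction is introduced), but I must check that after subdivision the map $f$ is simplicial and that no component of any point-preimage accidentally carries a nonabelian $\pi_1$-image. Concretely, I would verify that the toral subcomplex $Y_L$ maps into $L' \setminus (\text{cone directions})$ so that each fiber component sits inside a single product $\gt \times T^{\min \gt}$, guaranteeing the $\pi_1$-image is abelian. Once the fiber components are identified with tori or points, the growth estimate is immediate and FCA follows, giving $\omega(A_L) = 0$.
\end{proof}
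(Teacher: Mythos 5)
Your high-level strategy (verify FCA for a minimal dimensional model and invoke Babenko--Sabourau, with fibers that are tori or points) is the same as the paper's, but the map you actually propose has a genuine gap. You keep the standard model $X_L = \Cone(L)\cup Y_L$ and try to map it to $L'$ by composing $X_L \to \Cone(L) \hookrightarrow \Cone(L')$ with a retraction $\Cone(L')\to L'$. Such a retraction exists only for abstract reasons ($L'$ is contractible, so the inclusion $L'\hookrightarrow \Cone(L')$ is a homotopy equivalence of CW pairs), and that abstract existence gives no control over its point-preimages --- which is exactly what FCA demands. The failure mode is concrete: a component of $r^{-1}(p)\cap\Cone(L)$ can meet $L$ in points lying over barycenters of two simplices $\gs_1,\gs_2$ of $L$ that span no common simplex. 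Since the cone part of $X_L$ is attached to $Y_L$ along a section of $Y_L\to L$, the preimage in $X_L$ of such a component is connected (tori over different points of $L$ get joined by arcs running through the cone region), and its $\pi_1$-image contains $\langle A_{\gs_1},A_{\gs_2}\rangle$, which is nonabelian (indeed contains a free product of free abelian groups), so the FCA hypothesis fails for that map. Nothing in your sketch rules this out: the ``step requiring the most care'' that you flag at the end is precisely where the argument breaks, and for a general contractible $L'$ (which need not be collapsible) there is no canonical choice of $r$ that fixes it. Your side remark that $X_L$ includes into $X_{L'}$ is also problematic, since $L'$ need not be flag and $A_{L'}$ is in any case a different group.

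The paper avoids the cone entirely by changing the model of $BA_L$ rather than the target: it forms the amalgam $Y_L \cup_L L'$, gluing the toral subcomplex $Y_L$ to $L'$ along $L$, and cites Lemmas 1.21 and 1.23 of \cite{dgo} for the nontrivial fact that this $(d+1)$-dimensional complex is again a classifying space for $A_L$ (in the special case where $L$ itself is contractible, Lemma \ref{l:contract} says $Y_L$ alone suffices). For this model the map to $L'$ is tautological: the canonical projection $Y_L\to L\subset L'$ on one piece and the identity on $L'$ on the other. Every point-preimage is then exactly a torus (over points of $L$) or a point (over $L'-L$), so FCA holds by inspection. The missing ingredient in your write-up is this replacement of the classifying space --- or some equally explicit mechanism forcing fiber control; contractibility of $L'$ alone does not produce a map whose fiber subgroups grow subexponentially.
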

\begin{proof}
First, suppose that $L$ is contractible. We use the model $X_L$ of $BA_L$ as in the previous section. In particular, Lemma \ref{l:contract} guarantees a $(d+1)$-dimensional model for $BA_L$ which projects to $L$, and the preimage of a point in $L$ is homotopy equivalent to a torus of some dimension. Therefore, this model satisfies $FCA$, and hence $\omega(A_L) = 0$ \cite{bs21}. 

Now, suppose $L$ is $d$-dimensional and embeds into a $d$-dimensional contractible complex $L'$. Let $Y_L$ be the toral subcomplex for $L$. Then a model for $BA_L$  can be obtained by forming the amalgam $X_L = Y_L \cup_L L'$ (this follows from Lemma 1.21 and Lemma 1.23 in \cite{dgo}, see the discussion there in Section 1.4). This is $(d+1)$-dimensional, and naturally projects to $L'$. The preimage of a point is homotopy equivalent to a torus (a point if the point is in $L' - L$). This shows that this model satisfies $FCA$, and hence $\omega(A_L) = 0$. 
\end{proof}

\subsection{Non-vanishing of entropy for RAAG's}

We now show that non-vanishing of $\omega$ for RAAG's follows from work of Sauer on mod $p$-homology growth in residual, finite index normal chains \cite{s16}, and the calculation of this growth for RAAG's in \cite{aos}. 

First, assume that $X$ is a simplicial complex with residually finite fundamental group $G$. Let $$G = \gG_0 \ge \gG_1 \ge \gG_2 \ge \dots$$ be a chain of finite index normal subgroups with $\cap_k \gG_k = 1$, and let $X_k$ be the corresponding covers of $X$ (we say $\gG_k$ is a \emph{residual chain}). We say that $X$ has nonvanishing $\Fp$-homology growth in degree $i$ if $$\limsup_{k} \frac{b_i(X_k, \Fp)}{[G:\gG_k]} > 0$$

If one instead considers the $\mathbb Q$-homology growth, the limsup coincides with the $i^{th}$ $L^2$-Betti number of $X$ by a theorem of L{\"u}ck \cite{lueck}. If $X = BG$ is aspherical, then we will write $b_i(\gG_k,\Fp)$ instead of $b_i(B\gG_k,\Fp)$. 

Now, suppose that we have a simplicial complex $X$ which satisfies FCA; hence we have a map $f: X \rightarrow P$ with $\dim P < \dim X$. If we cover $P$ by open stars of vertices, then we can use $f$ to pull back this cover to $X$. In particular, if we cover $X$ by connected components of preimages of open stars, the multiplicity of this cover is equal to $\dim P + 1 \le \dim X$. Each open set in the cover deformation retracts to a connected component of $f^{-1}(p)$.  Therefore, the image of the fundamental group of each set in the cover is a subexponentially growing subgroup of $\pi_1(X)$.  More generally, we say an open set $U$ in a topological space $X$ is \emph{amenable} if the image subgroup $i_\ast \pi_1(U)$ in $\pi_1(X)$ is amenable.

The following theorem follows immediately from the proof of Theorem 1.8 in \cite{s16}

\begin{Theorem}
Let $G$ be residually finite, and let $\gG_k$ be a residual sequence of finite index normal subgroups. Suppose that there is a finite $BG$ with an amenable cover of multiplicity $n$. Then $$\lim_{k \rightarrow \infty} \frac{b_i(\gG_k, \Fp)}{[G:\gG_k]} = 0 \text{ for } i \ge n.$$

\end{Theorem}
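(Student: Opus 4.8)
The plan is to transport Sauer's amenable-cover vanishing argument to the finite covers $X_k \to X := BG$ corresponding to $\gG_k$, run the Mayer--Vietoris spectral sequence of the pulled-back cover, and observe that the multiplicity bound forces every surviving contribution in total degree $i \ge n$ to come from a \emph{positive}-degree homology group of a piece whose fundamental-group image is amenable; the mod-$p$ vanishing for amenable groups then kills all of these after dividing by the index. First I would set up the pullback. Let $p_k : X_k \to X$ be the degree-$[G:\gG_k]$ covering, and let $\mathcal{U} = \{U_1,\dots,U_m\}$ be the given amenable cover of multiplicity $n$. Since $X$ is aspherical, $X_k = B\gG_k$ and $b_i(\gG_k,\Fp) = \dim_{\Fp} H_i(X_k;\Fp)$. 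The sets $p_k^{-1}(U_\alpha)$ form an open cover $\mathcal{U}_k$ of $X_k$; because $p_k$ is a local homeomorphism the multiplicity of $\mathcal{U}_k$ is still $\le n$, and each component of $p_k^{-1}(U_\alpha)$ is a cover of $U_\alpha$, so the image of its $\pi_1$ in $\gG_k$ is a subgroup of the amenable group $\im(\pi_1(U_\alpha)\to G)$, hence amenable. Thus $\mathcal{U}_k$ is again an amenable cover of multiplicity $\le n$, and the same holds for all intersections $U_I := \bigcap_{\alpha\in I} U_\alpha$, which are empty once $|I| > n$.

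Next I would invoke the Mayer--Vietoris spectral sequence of $\mathcal{U}_k$ with $\Fp$ coefficients,
$$E^1_{s,t} = \bigoplus_{|I| = s+1} H_t\big(p_k^{-1}(U_I); \Fp\big) \ \Longrightarrow\ H_{s+t}(X_k; \Fp).$$
Since the multiplicity is $\le n$ we have $E^1_{s,t} = 0$ for $s \ge n$, so for a fixed total degree $i \ge n$ every nonzero term $E^1_{s,t}$ with $s+t = i$ satisfies $t = i - s \ge i - (n-1) \ge 1$. Consequently
$$b_i(\gG_k,\Fp) \ \le\ \sum_{\substack{s+t=i\\ t \ge 1}} \dim_{\Fp} E^1_{s,t},$$
a finite sum (over the finitely many subsets $I$ with $|I| \le n$) of \emph{positive}-degree homology dimensions of the pulled-back pieces.

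It then remains to show that each such term is $o([G:\gG_k])$. For fixed $I$ and $t \ge 1$, the covering $p_k^{-1}(U_I)\to U_I$ is classified by the action of $\pi_1(U_I)$ on $G/\gG_k$ through the amenable group $Q_I := \im(\pi_1(U_I)\to G)$; as the chain $\{\gG_k\}$ is residual, $Q_I$ inherits a residual, hence F{\o}lner-exhausting, chain. By a transfer/induction argument the normalized dimension $\dim_{\Fp} H_t(p_k^{-1}(U_I);\Fp)/[G:\gG_k]$ is then governed by the mod-$p$ homology growth of the amenable group $Q_I$, which vanishes in every positive degree --- this is exactly the technical heart of \cite{s16}, the mod-$p$ replacement for the Cheeger--Gromov vanishing used in Gromov's $L^2$ amenable-cover theorem. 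Summing over the finitely many $I$ and dividing by $[G:\gG_k]$, the displayed bound gives $\limsup_k b_i(\gG_k,\Fp)/[G:\gG_k] \le 0$; since Betti numbers are nonnegative the limit exists and equals $0$ for all $i \ge n$.

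The hard part is entirely this last input: the sublinear growth of positive-degree mod-$p$ Betti numbers of an amenable group along a residual chain, proved in \cite{s16} via F{\o}lner/Ornstein--Weiss averaging, where no Hilbert-module dimension is available as in the $L^2$ case. The pullback, the multiplicity count, and the spectral-sequence bookkeeping are routine, and the only observation beyond the manifold setting of \cite{s16} is that an abstract amenable cover of an arbitrary finite $BG$ suffices and that the degree threshold is $i \ge n$.
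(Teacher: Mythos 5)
Your argument has a genuine gap at exactly the step you label the ``technical heart,'' and it is not a gap that citing \cite{s16} can fill, because the statement you need there is false. You claim that $\dim_{\Fp} H_t\bigl(p_k^{-1}(U_I);\Fp\bigr)/[G:\gG_k] \to 0$ for every $t \ge 1$ whenever $Q_I = \im(\pi_1(U_I) \to G)$ is amenable. Amenability constrains only the $\pi_1$-image, not the topology of the piece: $U_I$ is an open subset of $BG$ and need not be aspherical. Concretely, if $U_I$ is simply connected (so $Q_I$ is trivial, hence amenable) with $H_2(U_I;\Fp) \ne 0$, then $p_k^{-1}(U_I) \cong U_I \times G/\gG_k$ and $\dim_{\Fp} H_2\bigl(p_k^{-1}(U_I);\Fp\bigr) = [G:\gG_k]\cdot \dim_{\Fp} H_2(U_I;\Fp)$ grows \emph{linearly} in the index. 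The same failure occurs with infinite amenable image: for $U_I \simeq S^1 \vee S^2$ with $\pi_1$-image $\zz$, each component of $p_k^{-1}(U_I)$ is a circle with $[\zz:\zz\cap\gG_k]$ spheres attached, and the normalized second Betti number is constantly $1$. In general each component of $p_k^{-1}(U_I)$ is the cover of $U_I$ corresponding to $(i_*)^{-1}(Q_I \cap \gG_k)$, so your normalized $E^1$-term equals $b_t\bigl(U_I^{(k)};\Fp\bigr)/[Q_I : Q_I \cap \gG_k]$: this is the homology growth of a \emph{non-aspherical} space along a \emph{non-residual} chain (the intersection of the chain is $\ker i_*$, not the trivial group), which is not controlled by any vanishing theorem for amenable groups. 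So the term-by-term bound on the $E^1$ page of the Mayer--Vietoris spectral sequence cannot work; the theorem is true only because of cancellation (differentials) that your inequality discards.

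This is precisely why Sauer's proof --- which is what the paper invokes --- has a different structure, and why asphericity enters beyond the identification $b_i(\gG_k,\Fp) = b_i(X_k,\Fp)$. Sauer uses F{\o}lner/Ornstein--Weiss data inside the amenable image groups, together with the nerve of the cover (which has dimension at most $n-1$), to build for each $k$ a complex $S(k)$ with sublinearly many cells in degrees $\ge n$, and uses asphericity to produce maps $f\colon X_k \to S(k)$ and $g\colon S(k) \to X_k$ with $g \circ f \simeq \id_{X_k}$. Then $b_i(X_k;\Fp)$ is bounded by the number of $i$-cells of $S(k)$, which is sublinear for $i \ge n$. The paper's entire proof of the stated theorem is the observation that this construction nowhere uses the manifold structure of $M$, only that $BG$ is finite and aspherical; your proposal replaces that construction with a spectral-sequence computation that does not survive the counterexamples above.
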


In [Theorem 1.8, \cite{s16}], Sauer shows that if an aspherical $n$-manifold $M$ has an open cover by amenable sets of multiplicity $n$, then the $\Fp$-homology growth of $\pi_1(M^n)$ vanishes in all degrees (for all finite index normal chains). To do this, Sauer constructs complexes $S(k)$ which homotopy retract\footnote{i.e. there are maps $f: \widetilde M^n/\gG_k \rightarrow S(k)$ and $g: S(k) \rightarrow \widetilde M^n/\gG_k$ so that $g \circ f \sim \id_{\widetilde M^n/\gG_k}$} onto $\widetilde M^n/\gG_k$ and have sublinear (in $[G:\gG_k]$) number of $n$-cells. The construction of these complexes does not require the manifold structure.
Therefore, the same argument shows that if an aspherical $n$-complex $X$ has an open cover by amenable sets of multiplicity $n$, then the $\Fp$-homology growth of $\pi_1(X)$ vanishes in degree $n$ (for the usual $L^2$-Betti numbers, this is Theorem C of \cite{s06}). 
).

In \cite{aos}, Avramidi, Okun, and the second author computed the $\Fp$-homology growth of RAAG's; the analogous theorem for $b_i^{(2)}(A_L)$ was proved earlier by Davis and Leary \cite{dl}.

\begin{Theorem}\label{t:aos}
Let $A_L$ be a RAAG based on a flag complex $L$. Let $\{\gG_k\}_{k \in \nn}$ be a residual chain of finite index normal subgroups. 
Then $$\lim_{k \rightarrow \infty} \frac{b_i(\gG_k, \Fp)}{[A_L:\gG_k]} = \bar b_{i-1}(L, \Fp),$$ where $\bar b_{i-1}(L, \Fp)$ is the reduced Betti number of $L$ with $\Fp$-coefficients.
\end{Theorem}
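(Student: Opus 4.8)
The plan is to compute the homology growth directly on the contractible complex $\cu=\cu(A_L,K)$ of Section~\ref{s:bal}, following the $L^2$-computation of Davis--Leary but replacing von Neumann dimension with the normalized $\Fp$-dimension supplied by the residual chain. Write $n=[A_L:\gG_k]$. Since $\cu$ is contractible and $\gG_k$ acts on it with cell stabilizers the finite-index subgroups $A_\gs\cap\gG_k\cong\zz^{|\gs|}$ of the abelian special subgroups, the Borel construction gives $H_i(\gG_k,\Fp)\cong H_i^{\gG_k}(\cu,\Fp)$. I would organize this through the isotropy (skeletal) spectral sequence of the action, writing $\cu^{(p)}$ for the $p$-cells,
$$E^1_{p,q}=\bigoplus_{\sigma\in\gG_k\backslash\cu^{(p)}}H_q((\gG_k)_\sigma,\Fp)\;\Longrightarrow\;H_{p+q}(\gG_k,\Fp),$$
in which the cells lying over a simplex $\tau$ of $K$ contribute through their stabilizer $A_{\min\tau}\cap\gG_k$.

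The heart is a weight count. The number of $\gG_k$-orbits of cells over a fixed cell $\tau$ of $K$ is the number of double cosets $\#(\gG_k\backslash A_L/A_{\min\tau})=n/m_k(\tau)$, where, using normality of $\gG_k$, every orbit has the uniform size $m_k(\tau)=[A_{\min\tau}:A_{\min\tau}\cap\gG_k]$. Hence
$$\tfrac1n\dim_\Fp E^1_{p,q}=\sum_{\tau\in K^{(p)}}\frac{1}{m_k(\tau)}\binom{|\min\tau|}{q}.$$
For $\min\tau\neq\emptyset$ the group $A_{\min\tau}\cong\zz^{|\min\tau|}$ is infinite, so the decreasing intersections $A_{\min\tau}\cap\gG_k$ have trivial limit, forcing $m_k(\tau)\to\infty$; every such term is $o(1)$. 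Thus only the cells through the cone point (those with $\min\tau=\emptyset$, hence trivial stabilizer and $m_k(\tau)=1$) survive the normalization, and they survive only in the row $q=0$, each with weight $1$. In the limit the normalized $E^1$-page is therefore concentrated in the bottom row, where it is the cellular chain complex spanned by the cone-point cells of $K$.

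It remains to identify this surviving complex. The cells of $K=\Cone(\mathrm{bary}\,L)$ through the cone point correspond, after deleting the cone vertex, to the simplices of the barycentric subdivision of $L$ (the empty chain giving the cone point itself), and their boundary operator — once the non-cone cells, which form a subcomplex of total normalized dimension $o(1)$, are discarded — is the reduced simplicial differential of $L$ shifted up by one degree. Its homology in degree $i$ is $\bar H_{i-1}(L,\Fp)$, so $\tfrac1n\dim_\Fp E^2_{i,0}\to\bar b_{i-1}(L,\Fp)$ while $\tfrac1n\dim_\Fp E^2_{p,q}\to0$ for $q\geq1$.

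I expect the main obstacle to be the passage from this limiting $E^2$-page to the abutment over $\Fp$, where no von Neumann trace is available to force exactness of the dimension function. The argument must show that the higher rows ($q\geq1$) and the spectral-sequence differentials perturb the bottom-row homology only by $o(n)$; this reduces to the elementary linear algebra that a subcomplex of total $\Fp$-dimension $o(n)$ shifts every homology dimension by $o(n)$, and that later differentials cannot increase dimensions. Combined with the uniform orbit-size identity above — which makes the weights exactly computable and is the one place normality of the chain is essential — this yields $\lim_k n^{-1}\dim_\Fp H_i(\gG_k,\Fp)=\bar b_{i-1}(L,\Fp)$, and in particular shows the limit exists.
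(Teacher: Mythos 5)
Your proof is correct, but note that the paper itself contains no proof of this statement: Theorem \ref{t:aos} is imported wholesale from \cite{aos}, where it is deduced from a computation of $\Fp$-homology growth for groups acting on contractible complexes with strict fundamental domain. Your argument is in effect a self-contained RAAG-specific version of that computation, run through the isotropy spectral sequence of the $\gG_k$-action on $\cu(A_L,K)$, and it handles correctly the two points where such an argument can go wrong. First, the orbit/stabilizer bookkeeping: normality of $\gG_k$ gives the uniform orbit count $n/m_k(\tau)$ and the uniform stabilizer $A_{\min\gt}\cap\gG_k$, and the observation that a decreasing chain of finite-index subgroups of $\zz^m$ ($m\ge 1$) with trivial intersection must have index tending to infinity is exactly what forces $m_k(\tau)\to\infty$ for $\min\gt\ne\emptyset$; this is the $\Fp$-replacement for the vanishing of $\ell^2$-Betti numbers of the stabilizers in the Davis--Leary computation. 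Second, the passage to the abutment without an exact dimension function: your reduction works because the non-cone cells of $\cu/\gG_k$ do form a genuine subcomplex (the property of not containing the cone point is closed under passing to faces) with $o(n)$ cells, the relative complex is precisely $\Fp[A_L/\gG_k]\otimes C_*(\Cone(L'),L')$ with $L'$ the barycentric subdivision of $L$, whose homology is exactly $n\,\bar b_{i-1}(L,\Fp)$, and the long exact sequence of the pair perturbs this by at most the number of cells of the subcomplex; since all higher differentials into the bottom row vanish for degree reasons and those out of it land in rows of total dimension $o(n)$, and there are at most $\dim L+1$ pages, the total correction is $o(n)$, yielding a genuine limit rather than a limsup. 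Two cosmetic points: the stabilizers fix cells pointwise (as the paper notes in Section \ref{s:bal}), which is what licenses the untwisted coefficients in your $E^1$-term, and it would be worth saying explicitly that $H_q(\zz^m,\Fp)\cong\Fp^{\binom{m}{q}}$ is being used; neither affects correctness.
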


By Theorem \ref{t:aos}, if $H_d(L, \Fp) \ne 0$, then $A_L$ has nonvanishing $\Fp$-homology growth in dimension $d+1$. By the above,  any minimal dimensional classifying space will not have FCA. Therefore, any minimal dimensional classifying space has FNCA, and by Babenko and Sabourau's results (plus the fact that RAAG's have uniform uniform exponential growth \cite{baudisch} we obtain $\omega(A_L) > 0$. 
This gives the following characterization of non-vanishing volume growth entropy of RAAG's.

\begin{Theorem}\label{t:nonvanishing}
Suppose that $A_L$ is a RAAG based on a $d$-dimensional flag complex $L$. 
If $H_{d+1}(L, \mathbb F_p) \ne 0$ for any $p$, then $\omega(A_L) > 0$. Hence if $H^d(L, \zz) \ne 0$, then $\omega(A_L) > 0$. 
\end{Theorem}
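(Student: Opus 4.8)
The plan is to argue by contradiction, feeding the $\Fp$-homology growth computation of Theorem~\ref{t:aos} into the Sauer-type vanishing result stated above, and then to exploit the RAAG-specific near-complementarity of FCA and FNCA. First I would reduce the cohomological hypothesis to a homological one: by the universal coefficient theorem (as recorded in the introduction), $H^d(L,\zz)\neq 0$ is equivalent to $H_d(L,\Fp)\neq 0$ for some prime $p$, so it suffices to treat the case $\bar b_d(L,\Fp)>0$. Applying Theorem~\ref{t:aos} with $i=d+1$ then gives, along any residual chain $\{\gG_k\}$,
$$\lim_{k\to\infty}\frac{b_{d+1}(\gG_k,\Fp)}{[A_L:\gG_k]}=\bar b_d(L,\Fp)>0,$$
so $A_L$ has nonvanishing $\Fp$-homology growth in the top degree $d+1=\gdim(A_L)$.

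Next I would show that no minimal-dimensional model of $BA_L$ can satisfy FCA. Suppose some such model $X$ did, witnessed by a simplicial map $f:X\to P$ with $\dim P<\dim X=d+1$. Pulling back the cover of $P$ by open stars of vertices and passing to connected components of preimages yields an open cover of $X$ of multiplicity $\dim P+1\le d+1$, each member of which deformation retracts onto a component of a point-preimage and hence has subexponentially growing, in particular amenable, image in $\pi_1(X)=A_L$. Since $X$ is a finite $BA_L$, the Sauer-type theorem stated above forces $\lim_k b_i(\gG_k,\Fp)/[A_L:\gG_k]=0$ for all $i\ge \dim P+1$, and in particular for $i=d+1$ (using $d+1\ge \dim P+1$). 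This contradicts the previous paragraph, since the homology growth of $\gG_k$ is an invariant of the group independent of the chosen classifying space.

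Finally I would upgrade the failure of FCA to FNCA. If $X$ has no FCA, then for \emph{every} $f:X\to P$ with $\dim P<\dim X$ there is a point-preimage some component of which has nonabelian image; by Baudisch's theorem every two-generator subgroup of a RAAG is abelian or free, and RAAG's have uniform uniform exponential growth, so this nonabelian image grows uniformly exponentially at rate $>\delta$ for a constant $\delta$ depending only on $A_L$. This is precisely FNCA with uniform constant $\delta$, so Babenko--Sabourau's criterion gives $\omega(X)>\epsilon_{d+1}>0$; as this holds for a (equivalently, every) minimal-dimensional model, we conclude $\omega(A_L)>0$.

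The step I expect to require the most care is the last one: FCA and FNCA are not literally complementary, because FNCA quotes the induced subgroup $\pi_1^{-1}(f(p))$ attached to a whole fiber while the failure of FCA supplies a nonabelian image of a single \emph{component} of a fiber. Reconciling these two subgroups, and converting ``nonabelian'' into a \emph{uniform} exponential growth bound, is exactly where the special algebraic structure of RAAG's (Baudisch's dichotomy together with uniform uniform growth) is indispensable, and where a purely dimension-theoretic argument would not suffice.
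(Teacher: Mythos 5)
Your proposal is correct and takes essentially the same route as the paper: apply Theorem~\ref{t:aos} to get nonvanishing $\Fp$-homology growth in degree $d+1$, use the Sauer-type amenable-cover vanishing theorem (via the pulled-back open-star cover of multiplicity $\dim P + 1$) to rule out FCA for any minimal-dimensional $BA_L$, and then upgrade the failure of FCA to FNCA via Baudisch's dichotomy and uniform uniform exponential growth, concluding with Babenko--Sabourau. The only differences are presentational --- you make the contradiction, the universal-coefficient reduction, and the classifying-space independence of homology growth more explicit than the paper does, which in fact also clarifies that the hypothesis ``$H_{d+1}(L,\Fp)\neq 0$'' in the theorem statement should read $H_d(L,\Fp)\neq 0$.
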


\begin{Remark}
Matt Clay pointed out to us that Proposition 3.9 in \cite{bc21} implies that $\omega(A_L) > \epsilon$ for $\epsilon$ only depending on $\dim L$. 
\end{Remark}

\begin{Remark}
The computation of $\Fp$-homology growth in \cite{aos} was extended in \cite{os}. The correct context for these computations seems to be a group $G$ acting on a contractible complex with strict fundamental domain $Q$. For example, a similar computation works for any residually finite Artin group which satisfies the $K(\pi,1)$-conjecture. By the same argument as above, if the nerve of these Artin groups has $H^{\dim L}(L, \zz) \ne 0$, then the minimal volume entropy will be strictly positive. The corresponding vanishing result for Artin groups seems harder to prove. 

\end{Remark}

\comment{
\section{Sketch of Sauer's result}

Let $X$ be an $n$-dimensional simplicial complex with residually finite fundamental group $G$. 
We start with a cover of $X$ by open, connected subsets $\{U_i\}$ of multiplicity $n$. We assume for simplicity that the induced subgroup of each element in the cover is free abelian. Let $\{\gG_k\}$ be a residual chain of finite index normal subgroups, and let $\hat G$ denote the profinite completion of $G$. We first construct measurable covers of $\hat G \times \widetilde X$. 

Now, let $p: \widetilde X \rightarrow X$ be the universal covering map, and let $\hat U_i$ be the covering of $U_i$ associated to the kernel of $i_\ast: \pi_1(U_i) \rightarrow \pi_1(X)$. Let $\Lambda_i$ denote the image of $i_\ast$. The preimage of $U_i$ in $\widetilde X$ is a disjoint union of copies of $\hat U_i$, indexed by the cosets of $\Lambda_i$ inside $G$, i.e. $$G \times_{\Lambda_i} \hat U_i \cong p^{-1}(U_i).$$

For each $i$, choose an open, relatively compact $K_i$ so that $\Lambda_i K_i = \hat U_i$. By properness of the $G$-action on $\widetilde X$, there is a finite set $\mathcal{F}$ of group elements that overlap these sets (i.e. $gK_i \cap K_j \ne \emptyset$). 

Since $\Lambda_i \cong \zz^n$, we can choose an $n$-cube $C_i = [0,N_i]^n$ which is large enough that most elements in $C$ have $\mathcal{F}$-images in $C_i$ (i.e. for any $\delta > 0$ the percentage of elements is $< \delta$). Each element of $C_i$ acts on $K_i$, and again there is a finite set $\mathcal{F}' \subset C$ that moves a $c_i$-translate of $K_i$ onto a $c_j$-translate of $K_j$. 

}

\begin{bibdiv}
	\begin{biblist}

		\bib{aos}{article}{ 
		author={Avramidi, Grigori}, 
		author={Okun, Boris}, 
		author={Schreve, Kevin}, 
		title={Mod $p$ and torsion homology growth in nonpositive curvature}, 
		journal = {Inventiones Mathematicae}, 
		volume = {226}, 
		number = {15}
		date={2021}, 
		}
		
		\bib{bs21}{article}{ 
		author={Babenko, Ivan}, 
		author={Sabourau, Stephane}, 
		title={Minimal volume entropy of simplicial complexes}, 
		note = {preprint, arXiv:math/2002.11069}, 
		
		}
		
			\bib{baudisch}{article}{ 
		author={Baudisch, Andreas}, 

		title={Subgroups of semi-free groups}, 
		journal = {Acta Math. Acad. Sci. Hungar.}, 
		date={1981}, 
		volume={38},
		pages = {19-28},
		}
		
		\bib{bc21}{article}{ 
		author={Bregman, Corey}, 
		author={Clay, Matt}, 
		title={Minimal volume entropy of free-by-cyclic groups and 2-dimensional right-angled Artin groups}, 
		journal = {Mathematische Annalen}, 
		date={2021}, 
		}

		 \bib{dl}{article} {

	author = {Davis, M. W.},
	author = {Leary, I. J.},
	title = {The {$l^2$}-cohomology of {A}rtin groups},
	journal = {J. London Math. Soc. (2)},

	volume = {68}, YEAR = {2003},
	number = {2},
	pages = {493--510},
	
}

 \bib{dgo}{article} {

	author = {Davis, M. W.},
	author = {Le, Giang},
	author = {Schreve, Kevin},
	title = {The action dimension of simple complexes of groups},
	journal = {Journal of Topology},

	volume = {12}, YEAR = {2019},

	pages = {1266-1314},
	
}

 \bib{g08}{book}{
	author = {Geoghegan, Ross},
	title = {Topological methods in group theory},
	series = {Graduate Texts in Mathematics}, publisher={Springer, New York},
	date = {2008},
	volume = {243}, ISBN={978-0-387-74611-1},
	url = {http://dx.doi.org/10.1007/978-0-387-74614-2},
 }
 
  \bib{gromov}{article}{
	author = {Gromov, Misha},
	title = {Volume and Bounded Cohomology},
	journal = {Inst. Hautes Études Sci. Publ. Math},
  number = {56}, 
YEAR = {1983},

	pages = {5-99},
 }

 \bib{lueck}{article}{
	author = {L\"uck, Wolfgang},
	title = {Approximating ${L}\sp 2$-invariants by their finite-dimensional analogues},
	date = {1994},
	issn = {1016-443X},
	journal = {Geom. Funct. Anal.} ,
	volume = {4},
	number = {4},
	pages = {455\ndash 481},
 }

		\bib{os}{article}{
		author={Okun, Boris}, 
		author={Schreve, Kevin}, 
		title={Torsion invariants of complexes of groups}, 
		journal = {arXiv:2108.08892},
		date={2021}, 
		}

		\bib{s06}{article}{ 
		author={Sauer, Roman},  
		title={Amenable covers, volume and L2-Betti numbers of aspherical manifolds}, 
		journal = {J. Reine Angew. Math},
		date={2009}, 
		volume = {636},
		pages = {47-92},
		}
		
		\bib{s16}{article}{ 
		author={Sauer, Roman},  
		title={Volume and homology growth of aspherical manifolds}, 
		journal = {Geom. Topol.},
		date={2016}, 
		volume = {20},
		pages = {1035-1059},
		}

\end{biblist}
\end{bibdiv}
\end{document}